\newtheorem{theorem}{Theorem}[section]
\newtheorem{lemma}[theorem]{Lemma}
\theoremstyle{definition}
\newtheorem{definition}[theorem]{Definition}
\theoremstyle{remark}
\newtheorem{conj}[theorem]{Conjecture}
\numberwithin{equation}{section}
\begin{document}

%
%
%
%
%
%
%
%
%

\title[Proof of some conjectural congruences of da Silva and Sellers]
 {Proof of some conjectural congruences of da Silva and Sellers}

\author{Ajit Singh}
\address{Department of Mathematics, Indian Institute of Technology Guwahati, Assam, India, PIN- 781039}
\email{ajit18@iitg.ac.in}

\author{Rupam Barman}
\address{Department of Mathematics, Indian Institute of Technology Guwahati, Assam, India, PIN- 781039}
\email{rupam@iitg.ac.in}

\date{October 27, 2021}


\subjclass{Primary 05A17, 11P83, 11F11}

\keywords{partition; 3-regular; three colours; modular forms}

\dedicatory{}

\begin{abstract} 
Let $p_{\{3, 3\}}(n)$ denote the number of $3$-regular partitions in three colours. In a very recent paper, da Silva and Sellers studied certain arithmetic properties of $p_{\{3, 3\}}(n)$. They further conjectured four Ramanujan-like congruences modulo $5$ satisfied by $p_{\{3, 3\}}(n)$. In this article, we confirm the conjectural congruences of da Silva and Sellers using the theory of modular forms.
\end{abstract}

\maketitle
\section{Introduction and statement of result} 
 A partition of a positive integer $n$ is any non-increasing sequence of positive integers, called parts,  whose sum is $n$. Let $\ell$ be a fixed positive integer. An $\ell$-regular partition of a positive integer $n$ is a partition of $n$ such that none of its part is divisible by $\ell$. 
 Let $b_{\ell}(n)$ be the number of $\ell$-regular partitions of $n$. The generating function for $b_{\ell}(n)$ is given by 
\begin{align*}
\sum_{n=0}^{\infty}b_{\ell}(n)q^n=\frac{f_{\ell}}{f_1},
\end{align*}
where $f_k:=(q^k; q^k)_{\infty}=\prod_{j=1}^{\infty}(1-q^{jk})$ and $k$ is a positive integer.
 \par 
 In 2018, Hirschhorn \cite{Hirschhorn} derived a number of congruences for $p_3(n)$ modulo high powers of 3, where $p_3(n)$ denotes the number of partitions of $n$ in three colours. Let $p_{\{3, 3\}}(n)$ denote the number of $3$-regular partitions in three colours, whose generating function is given by
 \begin{align}\label{gen-fun-1}
 \sum_{n=0}^{\infty}p_{\{3,3\}}(n)q^n=\frac{f_3^3}{f_1^3}.
 \end{align}
 In 2019, Gireesh and Mahadeva Naika \cite{Gireesh} studied the function $p_{\{3, 3\}}(n)$, and deduced some congruences modulo powers of 3 for $p_{\{3, 3\}}(n)$. In a very recent paper \cite{silva-sellers}, using elementary generating function manipulations and classical techniques, da Silva and Sellers significantly extend the list of proven arithmetic properties satisfied by $p_{\{3, 3\}}(n)$. They have given parity characterisation for $p_{\{3, 3\}}(2n)$. They provide a complete characterisation for $p_{\{3, 3\}}(n)$ modulo 3. For example, they prove that, for all $n\geq 0$
 \begin{align*}
 p_{\{3, 3\}}(3n+1)&\equiv p_{\{3, 3\}}(3n+2)\equiv 0\pmod 3,\\
 p_{\{3, 3\}}(3n)&\equiv \begin{cases}
 (-1)^{k+\ell} \pmod{3}, & \text{if $n=k(3k-1)/2+\ell(3\ell -1)/2$};\\
 0\pmod{3}, & \text{otherwise}.\nonumber
 \end{cases}
 \end{align*}
 They also find some congruences for $p_{\{3, 3\}}(n)$ modulo $4$ and $9$. They further conjecture four Ramanujan-like congruences modulo 5 satisfied by $p_{\{3, 3\}}(n)$.
\begin{conj}\cite[Conjecture 5.1]{silva-sellers}\label{conj1}
For all $n\geq 0$,
\begin{align}
\label{cong-1} p_{\{3, 3\}}(15n+6)&\equiv 0 \pmod{5},\\
\label{cong-2} p_{\{3, 3\}}(25n+6)&\equiv 0 \pmod{5},\\
\label{cong-3} p_{\{3, 3\}}(25n+16)&\equiv 0 \pmod{5},\\
\label{cong-4} p_{\{3, 3\}}(25n+21)&\equiv 0 \pmod{5}.
\end{align}
\end{conj}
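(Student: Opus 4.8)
The plan is to prove all four congruences uniformly with the theory of modular forms, following the standard three-step template: reduce modulo $5$ to a genuine holomorphic modular form, apply the Atkin $U_5$ operator to isolate the relevant arithmetic progressions, and finish by Sturm's bound.

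First I would reduce the generating function modulo $5$. Since $(1-q^n)^5\equiv 1-q^{5n}\pmod 5$, we have $f_1^5\equiv f_5\pmod 5$, so $1/f_1^3=f_1^2/f_1^5\equiv f_1^2/f_5\pmod 5$ and hence
\[
\sum_{n\ge 0}p_{\{3,3\}}(n)q^n=\frac{f_3^3}{f_1^3}\equiv\frac{f_1^2f_3^3}{f_5}\pmod 5 .
\]
The crucial structural feature is that $1/f_5$ is a power series in $q^5$; writing $1/f_5=(1/f_1)\mid V_5$, the intertwining relation $U_5\big((\phi\mid V_5)\,\psi\big)=\phi\,(U_5\psi)$ lets this factor pass through the coefficient-extraction operator. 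Thus $U_5$ applied to the right-hand side returns another eta-quotient, and the congruences $p_{\{3,3\}}(5n+r)\equiv 0$ become statements about the coefficients of explicit eta-quotients. For $15n+6$ I would combine this with the $3$-dissection coming from the $f_3$ factors (equivalently, work on a level divisible by $3$), while for $25n+6$, $25n+16$, $25n+21$ --- all of the form $5(5n+j)+1$ with $j\in\{1,3,4\}$ --- I would apply $U_5$ a second time to the progression $\equiv 1\pmod 5$.

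Next I would realize these eta-quotients as honest holomorphic modular forms. Multiplying $f_1^2f_3^3/f_5$ (times the appropriate power of $q$) by a carefully chosen auxiliary eta-product $\prod_\delta\eta(\delta\tau)^{r_\delta}$, I would arrange that the total exponent vector satisfies Ligozat's congruence conditions $\sum_\delta\delta r_\delta\equiv 0$ and $\sum_\delta(N/\delta)r_\delta\equiv 0\pmod{24}$, that the weight $k=\tfrac12\sum_\delta r_\delta$ is a positive integer, and --- most importantly --- that the order of vanishing $\frac{N}{24}\sum_\delta\frac{\gcd(c,\delta)^2 r_\delta}{\gcd(c,N/c)\,c\,\delta}$ is nonnegative at every cusp $a/c$ of $\Gamma_0(N)$. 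This produces a form $G\in M_k(\Gamma_0(N),\chi)$ with integer Fourier coefficients congruent to $p_{\{3,3\}}$ along the desired progression modulo $5$. Since $U_5$ preserves modularity (mapping $M_k(\Gamma_0(N),\chi)$ into a space of the same weight on a controlled level), each target congruence is reduced to the assertion that a specific holomorphic modular form vanishes identically modulo $5$.

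Finally I would invoke Sturm's theorem: a form in $M_k(\Gamma_0(N),\chi)$ with integer coefficients all of whose coefficients up to $\big\lfloor\frac{k}{12}[\mathrm{SL}_2(\mathbb Z):\Gamma_0(N)]\big\rfloor$ are divisible by $5$ is $\equiv 0\pmod 5$. A direct computation of the finitely many coefficients up to this bound then establishes each of \eqref{cong-1}--\eqref{cong-4}. I expect the main obstacle to be the second step rather than the reduction or the final check: one must choose the auxiliary eta-product and the level $N$ so that the resulting quotient is simultaneously of integral weight and holomorphic at all cusps (the cusp analysis via the order formula being the delicate part), and one must track carefully, through the one or two applications of $U_5$, exactly which residue class of $p_{\{3,3\}}$ each Fourier coefficient encodes, so that vanishing modulo $5$ yields precisely the four stated progressions and not merely some of them.
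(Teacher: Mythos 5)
Your overall strategy---reduce modulo $5$ via $f_1^5\equiv f_5\pmod 5$, pass the progression extraction through $U_5$ using the intertwining relation, realize the result as a holomorphic eta-quotient via Ligozat-type criteria, and finish with Sturm's bound---is the classical route, and it is genuinely different from what the paper does. The paper instead runs Radu's algorithmic method \cite{radu1,radu2}: it verifies that the tuples $(m,M,N,r,t)=(15,3,15,(-3,3),6)$, $(25,3,15,(-3,3),6)$ and $(25,3,15,(-3,3),16)$ lie in Radu's set $\Delta^{*}$, checks the cusp inequalities $p_{m,r}(\gamma_\delta)+p_{r'}^{*}(\gamma_\delta)\geq 0$ for explicit auxiliary vectors $r'=(30,0,0,0)$ and $r'=(50,0,0,0)$ (using Lemmas \ref{lem1} and \ref{lem2}), and then confirms the congruences by computer for $n\leq\lfloor\nu\rfloor$ (namely $28$, resp.\ $47$). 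Radu's machinery is precisely a systematization of your template: the auxiliary eta-product, the cusp analysis, and the dissection bookkeeping are packaged into the conditions defining $\Delta^{*}$ and the functions $p_{m,r}$, $p_{r'}^{*}$, and its advantage---as the paper itself notes---is that it requires checking fewer coefficients than Sturm's bound alone. A pleasant by-product of the paper's setup is that \eqref{cong-3} and \eqref{cong-4} come out simultaneously, because $P_{25,r}(16)=\{16,21\}$; in your scheme they are two separate computations ($j=3$ and $j=4$).

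That said, as written your argument is a program rather than a proof: every load-bearing object is left unspecified. You do not exhibit the auxiliary eta-product, the level, the weight, or the character; you do not carry out the cusp-order verification (which you yourself flag as the delicate step); you do not state the Sturm bounds; and you do not perform the finite coefficient checks. There is no a priori guarantee that a suitable auxiliary product exists at a given level---it must be found and verified case by case, and this is exactly what the membership conditions for $\Delta^{*}$ and the inequality hypothesis of Lemma \ref{lem1} formalize (note that the paper needs different auxiliary vectors for $m=15$ and $m=25$). Your arithmetic bookkeeping is sound---the identities $25n+6=5(5n+1)+1$, $25n+16=5(5n+3)+1$, $25n+21=5(5n+4)+1$, and $15n+6=5(3n+1)+1$ with a residual extraction modulo $3$ are all correct---but until the modular forms are explicitly constructed, shown holomorphic, and the finitely many coefficients actually checked, none of \eqref{cong-1}--\eqref{cong-4} is established.
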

In this article, using the theory of modular forms, we confirm that Conjecture \ref{conj1} is true.
\begin{theorem}\label{thm1}
Conjecture \ref{conj1} is true. 
\end{theorem}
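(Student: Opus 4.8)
The plan is to realize each targeted sub-progression of $p_{\{3,3\}}$ as the reduction modulo $5$ of a genuine holomorphic modular form, and then to collapse every congruence to a finite coefficient check via Sturm's bound. The first step is to pass to modular-form language and simplify mod $5$. Since $(1-q^j)^5\equiv 1-q^{5j}\pmod 5$, we have $f_1^5\equiv f_5\pmod 5$, so from \eqref{gen-fun-1}
\begin{equation*}
\sum_{n\ge 0}p_{\{3,3\}}(n)\,q^n=\frac{f_3^3}{f_1^3}=\frac{f_1^2 f_3^3}{f_1^5}\equiv\frac{f_1^2 f_3^3}{f_5}\pmod 5 .
\end{equation*}
Writing $\eta(\tau)=q^{1/24}f_1$ with $q=e^{2\pi i\tau}$, the right-hand side is, up to a fixed fractional power of $q$, the eta quotient $\eta(\tau)^2\eta(3\tau)^3/\eta(5\tau)$, of weight $(2+3-1)/2=2$. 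The goal is then to produce, for each progression, an integer-weight holomorphic form congruent mod $5$ to the relevant piece of this series.

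Next I would isolate the progressions by dissection. For \eqref{cong-1} I would use $15n+6=3(5n+2)$: first extract the $3$-dissection component $\sum_{n}p_{\{3,3\}}(3n)q^{n}$, then pick out the residue $2$ in the ensuing $5$-dissection. For \eqref{cong-2}--\eqref{cong-4} note that $6,16,21\equiv 1\pmod 5$; writing $25n+(5j+1)$ for $j=0,1,2,3,4$ produces the residues $1,6,11,16,21\pmod{25}$. So I would first extract $\sum_n p_{\{3,3\}}(5n+1)q^n$ and then apply the $U_5$ operator (equivalently, $5$-dissect), whereby \eqref{cong-2}, \eqref{cong-3}, \eqref{cong-4} correspond to the components $j=1,3,4$, while $j=0,2$ (the residues $1,11$) are precisely the two sub-progressions that need not vanish. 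Each dissection is carried out on the modular side, using that $U_5$ and the arithmetic-progression projections send modular forms to modular forms (possibly raising the level).

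The series obtained in the previous step is an eta quotient times a power of $q$, possibly with a pole, so I would multiply by an auxiliary eta product to force the result into a space $M_k\big(\Gamma_0(N),\chi\big)$ with explicit weight $k$, level $N$ (divisible by $15$, resp.\ $25$), and character $\chi$. Modularity and holomorphy are then verified by Ligozat's criteria: the two Newman divisibility conditions $\sum_\delta \delta\, r_\delta\equiv 0$ and $\sum_\delta (N/\delta)r_\delta\equiv 0 \pmod{24}$, the square condition on $\prod_\delta \delta^{r_\delta}$ that pins down $\chi$, and the order-at-cusps formula being nonnegative at every cusp of $\Gamma_0(N)$. Once the relevant piece is known to be congruent mod $5$ to some $g\in M_k(\Gamma_0(N),\chi)$ with integer coefficients, Sturm's theorem applies: if every coefficient of $g$ up to $\big\lfloor \tfrac{k}{12}[\mathrm{SL}_2(\mathbb Z):\Gamma_0(N)]\big\rfloor$ vanishes mod $5$, then $g\equiv 0\pmod 5$ identically, yielding the congruence for all $n$. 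This final check is a finite, mechanical computation.

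The delicate point is the construction in the third step: the auxiliary eta product must be chosen so that the weight is a (positive) integer, the Newman conditions hold, and — the genuinely hard part — the order of vanishing is nonnegative at \emph{every} cusp, so that the quotient is actually holomorphic rather than merely meromorphic. A secondary nuisance is that the Sturm bound for level $25$ (or $15$) in weight $2$ can be sizeable; but it remains a finite verification, so the computation closes the argument.
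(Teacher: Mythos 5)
Your route (reduce mod $5$ via $f_1^5\equiv f_5\pmod 5$, realize the result as an eta quotient, and close with a Sturm-bound check) is the classical pre-Radu method, genuinely different from the paper, which instead runs Radu's algorithm: the authors choose tuples $(m,M,N,r,t)\in\Delta^{*}$, use Wang's lemma for double-coset representatives, verify the cusp conditions $p_{m,r}(\gamma_\delta)+p_{r'}^{*}(\gamma_\delta)\geq 0$, and then only need to check $29$ (resp.\ $48$) initial coefficients. However, as written your plan has a genuine gap in the ordering of steps $2$ and $3$. The object you propose to dissect, $\eta(\tau)^2\eta(3\tau)^3/\eta(5\tau)$, is \emph{not} a holomorphic modular form: since $2\cdot 1+3\cdot 3-5=6\not\equiv 0\pmod{24}$, its multiplier is not a Dirichlet character (this is exactly your stray $q^{1/4}$), and its order at the cusps $a/c$ of $\Gamma_0(45)$ with $5\mid c$, $3\nmid c$ is negative (the relevant cusp sum is $2+1-5=-2<0$). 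So the fact you invoke --- that $U_5$ and progression projections send modular forms to modular forms --- does not apply to it. Worse, after dissection the resulting series is in general \emph{not} ``an eta quotient times a power of $q$'': the $5$-dissection of $f_1$ already involves Rogers--Ramanujan-type products that are not eta quotients. Consequently Ligozat's criteria, which apply only to eta quotients, cannot be used in your step $3$ to certify holomorphy of the dissected object, and that step fails as stated.

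The repair is to reverse the order: first multiply the original eta quotient by an explicit auxiliary eta product $A$ (a product of eta quotients is still an eta quotient, so Ligozat applies) to land in some $M_k(\Gamma_0(N),\chi)$; then apply the progression projections, which raise the level but preserve holomorphy; then apply Sturm at the raised level. Note also that you cannot insist that $A\equiv 1\pmod 5$: any eta quotient congruent to $1$ modulo $5$ necessarily has $\sum_\delta \delta r_\delta=0$, so it cannot repair the $24$-divisibility defect. Instead one uses that $A$ is $q^s$ times a unit in $\mathbb{Z}[[q]]$, so $A\cdot S\equiv 0\pmod 5$ if and only if $S\equiv 0\pmod 5$. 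With this reordering your plan becomes the standard classical argument and would succeed, but the pieces you defer --- the explicit $A$, the explicit weight, level and character, and the actual coefficient verification (which at a level divisible by $45\cdot 25^2$ runs to thousands of coefficients, versus the paper's $48$) --- are precisely the substance of the proof; as it stands, the proposal is a program rather than a proof, and its stated version contains a step that does not go through.
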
 
\section{Proof of Theorem \ref{thm1}}
 We prove Theorem \ref{thm1} using the approach developed in \cite{radu1, radu2}. We define the following matrix groups: 
\begin{align*}
\Gamma & :=\left\{\begin{bmatrix}
a  &  b \\
c  &  d      
\end{bmatrix}: a, b, c, d \in \mathbb{Z}, ad-bc=1
\right\},\\
\Gamma_{\infty} & :=\left\{
\begin{bmatrix}
1  &  n \\
0  &  1      
\end{bmatrix} \in \Gamma : n\in \mathbb{Z}  \right\}.
\end{align*}
For a positive integer $N$, let 
$$\Gamma_{0}(N) :=\left\{
\begin{bmatrix}
a  &  b \\
c  &  d      
\end{bmatrix} \in \Gamma : c\equiv~0\pmod N \right\}.$$
The index of $\Gamma_{0}(N)$ in $\Gamma$ is
\begin{align*}
 [\Gamma : \Gamma_0(N)] = N\prod_{p|N}(1+p^{-1}), 
\end{align*}
where $p$ is a prime divisor of $N$.
\par 
 We now recall some of the definitions and results 
from \cite{radu1, radu2} which will be used to prove our results. For a positive integer $M$, let $R(M)$ be the set of integer sequences $r=(r_\delta)_{\delta|M}$ indexed by the positive divisors of $M$. 
If $r \in R(M)$ and $1=\delta_1<\delta_2< \cdots <\delta_k=M$ 
are the positive divisors of $M$, we write $r=(r_{\delta_1}, \ldots, r_{\delta_k})$. Define $c_r(n)$ by 
\begin{align}
\sum_{n=0}^{\infty}c_r(n)q^n:=\prod_{\delta|M}(q^{\delta};q^{\delta})^{r_{\delta}}_{\infty}=\prod_{\delta|M}\prod_{n=1}^{\infty}(1-q^{n \delta})^{r_{\delta}}.
\end{align}
The approach to proving congruences for $c_r(n)$ developed by Radu \cite{radu1, radu2} reduces the number of coefficients that one must check as compared with the classical method which uses Sturm's bound alone.
\par 
Let $m$ be a positive integer. For any integer $s$, let $[s]_m$ denote the residue class of $s$ in $\mathbb{Z}_m:= \mathbb{Z}/ {m\mathbb{Z}}$. 
Let $\mathbb{Z}_m^{*}$ be the set of all invertible elements in $\mathbb{Z}_m$. Let $\mathbb{S}_m\subseteq\mathbb{Z}_m$  be the set of all squares in $\mathbb{Z}_m^{*}$. For $t\in\{0, 1, \ldots, m-1\}$
and $r \in R(M)$, we define a subset $P_{m,r}(t)\subseteq\{0, 1, \ldots, m-1\}$ by
\begin{align*}
P_{m,r}(t):=\left\{t': \exists [s]_{24m}\in \mathbb{S}_{24m} ~ \text{such} ~ \text{that} ~ t'\equiv ts+\frac{s-1}{24}\sum_{\delta|M}\delta r_\delta \pmod{m} \right\}.
\end{align*}
\begin{definition}
	Suppose $m, M$ and $N$ are positive integers, $r=(r_{\delta})\in R(M)$ and $t\in \{0, 1, \ldots, m-1\}$. Let $k=k(m):=\gcd(m^2-1,24)$ and write  
	\begin{align*}
	\prod_{\delta|M}\delta^{|r_{\delta}|}=2^s\cdot j,
	\end{align*}
	where $s$ and $j$  are nonnegative integers with $j$ odd. The set $\Delta^{*}$ consists of all tuples $(m, M, N, (r_{\delta}), t)$ satisfying these conditions and all of the following.
	\begin{enumerate}
		\item Each prime divisor of $m$ is also a divisor of $N$.
		\item $\delta|M$ implies $\delta|mN$ for every $\delta\geq1$ such that $r_{\delta} \neq 0$.
		\item $kN\sum_{\delta|M}r_{\delta} mN/\delta \equiv 0 \pmod{24}$.
		\item $kN\sum_{\delta|M}r_{\delta} \equiv 0 \pmod{8}$.  
		\item  $\frac{24m}{\gcd{(-24kt-k{\sum_{{\delta}|M}}{\delta r_{\delta}}},24m)}$ divides $N$.
		\item If $2|m$, then either $4|kN$ and $8|sN$ or $2|s$ and $8|(1-j)N$.
	\end{enumerate}
\end{definition}
Let $m, M, N$ be positive integers. For $\gamma=
\begin{bmatrix}
	a  &  b \\
	c  &  d     
\end{bmatrix} \in \Gamma$, $r\in R(M)$ and $r'\in R(N)$, set 
	\begin{align*}
	p_{m,r}(\gamma):=\min_{\lambda\in\{0, 1, \ldots, m-1\}}\frac{1}{24}\sum_{\delta|M}r_{\delta}\frac{\gcd^2(\delta a+ \delta k\lambda c, mc)}{\delta m}
	\end{align*}
and 
	\begin{align*}
	p_{r'}^{*}(\gamma):=\frac{1}{24}\sum_{\delta|N}r'_{\delta}\frac{\gcd^2(\delta, c)}{\delta}.
	\end{align*}
	\begin{lemma}\label{lem1}\cite[Lemma 4.5]{radu1} Let $u$ be a positive integer, $(m, M, N, r=(r_{\delta}), t)\in\Delta^{*}$ and $r'=(r'_{\delta})\in R(N)$. 
	Let $\{\gamma_1,\gamma_2, \ldots, \gamma_n\}\subseteq \Gamma$ be a complete set of representatives of the double cosets of $\Gamma_{0}(N) \backslash \Gamma/ \Gamma_\infty$. 
	Assume that $p_{m,r}(\gamma_i)+p_{r'}^{*}(\gamma_i) \geq 0$ for all $1 \leq i \leq n$. Let $t_{min}=\min_{t' \in P_{m,r}(t)} t'$ and
	\begin{align*}
	\nu:= \frac{1}{24}\left\{ \left( \sum_{\delta|M}r_{\delta}+\sum_{\delta|N}r'_{\delta}\right)[\Gamma:\Gamma_{0}(N)] -\sum_{\delta|N} \delta r'_{\delta}\right\}-\frac{1}{24m}\sum_{\delta|M}\delta r_{\delta} 
	- \frac{ t_{min}}{m}.
 	\end{align*}	
	If the congruence $c_r(mn+t')\equiv0\pmod u$ holds for all $t' \in P_{m,r}(t)$ and $0\leq n\leq \lfloor\nu\rfloor$, then it holds for all $t'\in P_{m,r}(t)$ and $n\geq0$.
	\end{lemma}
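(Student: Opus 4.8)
The plan is to recognize that the quoted statement is Radu's modularity-plus-Sturm criterion, and to reconstruct its proof by packaging the arithmetic progression $\{c_r(mn+t') : t'\in P_{m,r}(t)\}$ into a single holomorphic modular form on $\Gamma_0(N)$ and then invoking Sturm's theorem. First I would introduce the eta-quotient $\mathcal A(\tau)=\prod_{\delta|M}\eta(\delta\tau)^{r_\delta}=q^{\frac{1}{24}\sum_{\delta|M}\delta r_\delta}\prod_{\delta|M}(q^\delta;q^\delta)^{r_\delta}_\infty$, whose $q$-expansion carries the coefficients $c_r(n)$, and apply the $m$-dissection operator retaining only the terms whose exponent lies in a fixed residue class modulo $m$. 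The crucial point is that the six conditions defining the set $\Delta^{*}$ are precisely the hypotheses under which the earlier modularity theorem of Radu in \cite{radu1} applies, so that after multiplying the dissected series by the auxiliary eta-quotient $\prod_{\delta|N}\eta(\delta\tau)^{r'_\delta}$ and a suitable rational power of $q$ one obtains a function transforming like a modular form on $\Gamma_0(N)$.

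Next I would explain why the full orbit $P_{m,r}(t)$ must appear. When a matrix $\gamma=\left[\begin{smallmatrix} a&b\\c&d\end{smallmatrix}\right]\in\Gamma_0(N)$ acts on the dissected series, the residue class $t$ is not fixed but is transported to $ts+\frac{s-1}{24}\sum_{\delta|M}\delta r_\delta \pmod m$, where $[s]_{24m}$ ranges over the squares $\mathbb S_{24m}$ arising from the lower-right entry of $\gamma$. Consequently the only $\Gamma_0(N)$-invariant combination is the sum over the entire orbit, which is exactly $P_{m,r}(t)$; this is why the conclusion is stated simultaneously for all $t'\in P_{m,r}(t)$ rather than for $t$ alone.

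I would then verify holomorphy at the cusps. The cusps of $\Gamma_0(N)$ are indexed by the double cosets $\Gamma_0(N)\backslash\Gamma/\Gamma_\infty$, and the quantities $p_{m,r}(\gamma)$ and $p^{*}_{r'}(\gamma)$ are, via Ligozat's formula for the order of an eta-quotient at a cusp, exactly $\tfrac{1}{24}$ times the invariant orders of the dissected eta-quotient and of the auxiliary eta-quotient at the cusp $\gamma(\infty)$. Hence the hypothesis $p_{m,r}(\gamma_i)+p^{*}_{r'}(\gamma_i)\geq 0$ for every representative $\gamma_i$ guarantees nonnegative order at each cusp, so the product is a holomorphic modular form $F$ of weight $w=\tfrac12\big(\sum_{\delta|M}r_\delta+\sum_{\delta|N}r'_\delta\big)$ on $\Gamma_0(N)$, with a character determined by conditions (3), (4) and (6). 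A direct order count shows that the vanishing of $F$ at $\infty$ is governed by $t_{min}$ together with the eta prefactors, which is exactly the negative correction term in the definition of $\nu$.

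The final step is the Sturm argument, and this is where the finite bound $\nu$ comes from. Reducing $F$ modulo $u$ and applying Sturm's theorem for modular forms on $\Gamma_0(N)$, if all coefficients of $F$ up to $\tfrac{w}{12}[\Gamma:\Gamma_0(N)]$ vanish modulo $u$, then $F\equiv 0\pmod u$, so every coefficient vanishes. Unwinding the dissection and the $q$-normalization converts ``the first $\lfloor\nu\rfloor+1$ coefficients of $F$ vanish mod $u$'' into ``$c_r(mn+t')\equiv 0\pmod u$ for all $t'\in P_{m,r}(t)$ and $0\leq n\leq\lfloor\nu\rfloor$,'' and $\nu$ is precisely the Sturm bound rescaled by $1/m$ and shifted by the order of vanishing at $\infty$. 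The main obstacle is not the Sturm step, which is routine, but the modularity bookkeeping: showing that conditions (1)--(6) force the multiplier system of the combined eta-quotient to be trivial (up to a character) on $\Gamma_0(N)$, and that the dissection orbit is correctly identified with $P_{m,r}(t)$. This rests entirely on Ligozat's transformation law for $\eta(\delta\tau)$ and on a careful tracking of the $24m$-torsion data through the action of $\Gamma_0(N)$.
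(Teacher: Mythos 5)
First, a structural point: the paper does not prove this lemma at all --- it is imported verbatim, with citation, as Lemma 4.5 of Radu \cite{radu1}, so there is no internal proof to compare yours against; your text has to be judged as a reconstruction of Radu's argument. At the level of architecture it is faithful: the eta-quotient carrying $c_r(n)$, the $m$-dissection, the orbit $P_{m,r}(t)$ arising from the action of $\Gamma_0(N)$ through the squares $[s]_{24m}$, holomorphy at the cusps indexed by $\Gamma_{0}(N) \backslash \Gamma/ \Gamma_\infty$ controlled by $p_{m,r}(\gamma_i)+p_{r'}^{*}(\gamma_i)\geq 0$ (note, though, that $p_{m,r}$, with its minimum over $\lambda$, is a lower bound on the cusp order of every dissection component, not the exact invariant order you claim), and a Sturm-type bound producing $\nu$. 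All of this is the right skeleton.

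There is, however, one concrete wrong turn. You assert that ``the only $\Gamma_0(N)$-invariant combination is the sum over the entire orbit.'' Radu's invariant object is the \emph{product} $\prod_{t'\in P_{m,r}(t)} g_{m,t'}(\tau)$, multiplied by the auxiliary eta-quotient: a matrix in $\Gamma_0(N)$ permutes the components $g_{m,t'}$ only up to automorphy multipliers (roots of unity) that in general depend on $t'$, so the sum need not transform as a modular form at all, whereas in the product these phases combine into a single multiplier which the conditions defining $\Delta^{*}$ trivialize. This error propagates: the product has weight $\frac{1}{2}\big(|P_{m,r}(t)|\sum_{\delta|M}r_{\delta}+\sum_{\delta|N}r'_{\delta}\big)$ rather than your $w$, so your identification of $\nu$ as the weight-$w$ Sturm bound rescaled by $1/m$ does not literally hold; and the final unwinding is no longer the routine step you describe. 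One must instead argue that if every progression $c_r(mn+t')$, $t'\in P_{m,r}(t)$, vanishes modulo $u$ for $0\leq n\leq\lfloor\nu\rfloor$, then the orders of vanishing modulo $u$ \emph{add} across the $|P_{m,r}(t)|$ factors of the product --- which is precisely why the stated $\nu$ carries no factor of $|P_{m,r}(t)|$ --- and one then needs a separate argument to pass from the vanishing of the product modulo $u$ back to the vanishing of each individual progression (a deduction that is delicate, e.g.\ for composite $u$ one cannot simply invoke that a product of power series vanishing modulo $u$ forces a factor to vanish). These two steps, product-invariance and the valuation bookkeeping, are the actual content of Radu's Lemma 4.5 and are missing from, indeed contradicted by, your sketch.
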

	To apply the above lemma, we need the following result which gives us a complete set of representatives of the double coset in  
	$\Gamma_{0}(N) \backslash \Gamma/ \Gamma_\infty$. 
	\begin{lemma}\label{lem2}\cite[Lemma 4.3]{wang} If $N$ or $\frac{1}{2}N$ is a square-free integer, then
		\begin{align*}
		\bigcup_{\delta|N}\Gamma_0(N)\begin{bmatrix}
		1  &  0 \\
		\delta  &  1      
		\end{bmatrix}\Gamma_ {\infty}=\Gamma.
		\end{align*}
	\end{lemma}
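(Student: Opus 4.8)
The plan is to identify the double cosets in $\Gamma_0(N)\backslash\Gamma/\Gamma_\infty$ with the cusps of $\Gamma_0(N)$ and then use the hypothesis on $N$ to show that the matrices $\begin{bmatrix}1&0\\\delta&1\end{bmatrix}$, $\delta\mid N$, occupy all of them. First I would pass from matrices to their first columns. Sending $\gamma=\begin{bmatrix}a&b\\c&d\end{bmatrix}\in\Gamma$ to the primitive vector $(a,c)^{\mathsf T}$, I note that right multiplication by an element of $\Gamma_\infty$ leaves the first column unchanged, that two matrices of $\Gamma$ with the same first column differ by a right factor from $\Gamma_\infty$, and that left multiplication by $g\in\Gamma_0(N)$ sends $(a,c)^{\mathsf T}$ to $g\,(a,c)^{\mathsf T}$. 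Because $-I\in\Gamma_0(N)$, the sign ambiguity of the column is already absorbed on the left, so $\gamma\mapsto(a,c)^{\mathsf T}$ induces a bijection between $\Gamma_0(N)\backslash\Gamma/\Gamma_\infty$ and the $\Gamma_0(N)$-orbits of primitive vectors, that is, the cusps of $\Gamma_0(N)$; under it $\begin{bmatrix}1&0\\\delta&1\end{bmatrix}$ corresponds to the cusp $1/\delta$.

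Next I would isolate a separating invariant. To a primitive vector $(a,c)^{\mathsf T}$ attach $\gcd(c,N)$. This is $\Gamma_0(N)$-invariant: for $g=\begin{bmatrix}\alpha&\beta\\\gamma&\delta'\end{bmatrix}\in\Gamma_0(N)$ the new lower entry is $\gamma a+\delta' c\equiv \delta' c\pmod N$ since $N\mid\gamma$, and $\gcd(\delta',N)=1$ because $\alpha\delta'\equiv1\pmod N$, whence $\gcd(\gamma a+\delta' c,N)=\gcd(c,N)$. For $(1,\delta)^{\mathsf T}$ this invariant equals $\gcd(\delta,N)=\delta$, and these values are pairwise distinct as $\delta$ runs over the divisors of $N$. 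Consequently the cusps $1/\delta$ $(\delta\mid N)$ lie in pairwise distinct double cosets.

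Finally I would count. It is classical that the number of cusps of $\Gamma_0(N)$ equals $\sum_{\delta\mid N}\phi\!\left(\gcd(\delta,N/\delta)\right)$. A prime $p$ can divide $\gcd(\delta,N/\delta)$ only if $v_p(N)\ge2$; when $N$ or $\tfrac12N$ is squarefree, the sole such prime is $p=2$ with $v_2(N)=2$, which forces $\gcd(\delta,N/\delta)\in\{1,2\}$ and hence $\phi\!\left(\gcd(\delta,N/\delta)\right)=1$ for every $\delta\mid N$. Thus $\Gamma_0(N)$ has exactly $\sum_{\delta\mid N}1=d(N)$ cusps, where $d(N)$ is the number of positive divisors of $N$, so there are exactly $d(N)$ double cosets. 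Since the $d(N)$ matrices $\begin{bmatrix}1&0\\\delta&1\end{bmatrix}$ already meet $d(N)$ distinct double cosets, they form a complete set of representatives, giving $\bigcup_{\delta\mid N}\Gamma_0(N)\begin{bmatrix}1&0\\\delta&1\end{bmatrix}\Gamma_\infty=\Gamma$.

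The step I expect to require the most care is the first one: verifying that using $\Gamma_\infty$ on the right, rather than $\pm\Gamma_\infty$, still yields precisely the $\Gamma_0(N)$-orbits of primitive vectors. This hinges on $-I\in\Gamma_0(N)$, which lets the left group absorb the sign. The remaining steps are bookkeeping: the invariant $\gcd(c,N)$ separates the divisors automatically, and the role of the squarefreeness hypothesis is exactly to force $\phi\!\left(\gcd(\delta,N/\delta)\right)=1$, so that $\gcd(c,N)$ becomes a \emph{complete} invariant and the count collapses to $d(N)$.
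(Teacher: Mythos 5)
Your argument is correct, but it takes a genuinely different route from the source: the paper itself gives no proof of this lemma, quoting it from Wang \cite{wang}, and the proof there (like the squarefree special case it extends in Radu--Sellers \cite{radu2}) is an explicit matrix computation, not a counting argument. In that constructive route one takes $\gamma=\begin{bmatrix} a & b\\ c & d\end{bmatrix}\in\Gamma$, sets $\delta=\gcd(c,N)$, and solves directly for $n\in\mathbb{Z}$ making $\gamma\begin{bmatrix} 1 & n\\ 0 & 1\end{bmatrix}^{-1}\begin{bmatrix} 1 & 0\\ \delta & 1\end{bmatrix}^{-1}\in\Gamma_0(N)$: writing $c=\delta c_0$ and $N=\delta N_0$, the lower-left condition reduces to $n\delta c_0\equiv d-c_0\pmod{N_0}$, which is solvable because $\gcd(c_0,N_0)=1$ and the hypothesis that $N$ or $\frac{1}{2}N$ is squarefree forces $\gcd(\delta,N_0)\in\{1,2\}$, the case $\gcd(\delta,N_0)=2$ being handled by parity ($d$ and $c_0$ are then both odd, so $2\mid d-c_0$). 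That computation proves exactly the stated covering and nothing more. Your route instead identifies $\Gamma_0(N)\backslash\Gamma/\Gamma_\infty$ with $\Gamma_0(N)$-orbits of primitive vectors (your care about absorbing the sign via $-I\in\Gamma_0(N)$ is exactly the right point, since the paper's $\Gamma_\infty$ does not contain $-I$), separates the matrices $\gamma_\delta$ by the invariant $\gcd(c,N)$, and then outsources the heavy lifting to the classical cusp count $\sum_{\delta\mid N}\phi\left(\gcd(\delta,N/\delta)\right)$, which the squarefreeness hypothesis collapses to $d(N)$; each of these steps checks out (the invariance computation $\gamma a+\delta' c\equiv\delta' c\pmod{N}$ with $\delta'\in\mathbb{Z}_N^{*}$ is correct, and $\phi\left(\gcd(\delta,N/\delta)\right)=1$ termwise is exactly where the hypothesis enters). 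The trade-off: Wang's argument is self-contained and effective, while yours presupposes the cusp-count formula; in exchange, yours proves strictly more, namely that the $d(N)$ matrices lie in pairwise \emph{distinct} double cosets, so they form a complete and irredundant set of representatives --- which is the form in which the present paper actually uses the lemma when applying Lemma \ref{lem1}.
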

\begin{proof}[Proof of Theorem \ref{thm1}]
From \eqref{gen-fun-1}, we have 
\begin{align*}
\sum_{n=0}^{\infty}p_{\{3,3\}}(n)q^n=\frac{(q^3; q^3)_{\infty}^3}{(q; q)_{\infty}^3}.
\end{align*}
We choose $(m,M,N,r,t)=(15,3,15,(-3,3),6)$. We verify that $(m,M,N,r,t) \in \Delta^{*}$ and $P_{m,r}(t)=\{6\}$.
By Lemma \ref{lem2}, we know that $\left\{\begin{bmatrix}
	1  &  0 \\
	\delta  &  1      
	\end{bmatrix}:\delta|15 \right\}$ forms a complete set of double coset representatives of $\Gamma_{0}(N) \backslash \Gamma/ \Gamma_\infty$.
	Let $\gamma_{\delta}=\begin{bmatrix}
	1  &  0 \\
	\delta  &  1      
	\end{bmatrix}$. Let $r'=(30,0,0,0)\in R(15)$ and we use $Sage$ to verify that
	$p_{m,r}(\gamma_{\delta})+p_{r'}^{*}(\gamma_{\delta}) \geq 0$ for each $\delta | N$. Using Lemma \ref{lem1} we have 
\begin{align*}
	\nu&:= \frac{1}{24}\left\{ \left( \sum_{\delta|M}r_{\delta}+\sum_{\delta|N}r'_{\delta}\right)[\Gamma:\Gamma_{0}(N)] -\sum_{\delta|N} \delta r'_{\delta}\right\}-\frac{1}{24m}\sum_{\delta|M}\delta r_{\delta} 
	- \frac{ t_{min}}{m}\\
	&=\frac{1}{24}\left\{ \left( 0+30\right)24 -30\right\}-\frac{1}{24\cdot 15}(-3+9) 
	- \frac{ 6}{15}=\frac{85}{3}.
\end{align*}	
Therefore $\lfloor\nu\rfloor$ is equal to $28$. Using $Sage$ we verify that $p_{\{3,3\}}(15n+6)\equiv0\pmod{5}$ for all $0\leq n\leq 28$. By Lemma \ref{lem1} we conclude that $p_{\{3,3\}}(15n+6)\equiv0\pmod{5}$ for all $n\geq 0$. This completes the proof of \eqref{cong-1}.  
To prove \eqref{cong-2}, we take $(m,M,N,r,t)=(25,3,15,(-3,3),6)$. It is easy to verify that $(m,M,N,r,t) \in \Delta^{*}$ and $P_{m,r}(t)=\{6\}$. We compute that the upper bound in Lemma \ref{lem1} is $\lfloor\nu\rfloor=47$. Following similar steps as shown before, we find that $p_{\{3,3\}}(25n+6)\equiv0\pmod{5}$ for all $n\geq 0$. 
\par 
We now prove \eqref{cong-3} and \eqref{cong-4}. We take $(m,M,N,r,t)=(25,3,15,(-3,3),16)$. It is easy to verify that $(m,M,N,r,t) \in \Delta^{*}$ and $P_{m,r}(t)=\{16,21\}$. Here we also check that $t_{min}=16$. By Lemma \ref{lem2}, we know that $\left\{\begin{bmatrix}
	1  &  0 \\
	\delta  &  1      
	\end{bmatrix}:\delta|15 \right\}$ forms a complete set of double coset representatives of $\Gamma_{0}(N) \backslash \Gamma/ \Gamma_\infty$. Let $\gamma_{\delta}=\begin{bmatrix}
	1  &  0 \\
	\delta  &  1      
	\end{bmatrix}$. Let $r'=(50,0,0,0)\in R(15)$ and we use $Sage$ to verify that
$p_{m,r}(\gamma_{\delta})+p_{r'}^{*}(\gamma_{\delta}) \geq 0$ for each $\delta | N$. We compute that the upper bound in Lemma \ref{lem1} is $\lfloor\nu\rfloor=47$. Using $Sage$ we verify that $p_{\{3,3\}}(25n+t')\equiv0\pmod{5}$ for all $t' \in P_{m,r}(t)$ and for $0\leq n\leq 47$. By Lemma \ref{lem1} we conclude that $p_{\{3,3\}}(25n+t')\equiv0\pmod{5}$ for all $t' \in P_{m,r}(t)$ and for all $n\geq 0$. This completes the proof of the theorem.
\end{proof}


\end{document}